\newcommand{\rrVert}{\Vert}
\newcommand{\llVert}{\Vert}
\newcommand{\rrvert}{\vert}
\newcommand{\llvert}{\vert}
\newtheorem{thm}{Theorem}
\newtheorem{note}{Note}
\theoremstyle{definition}
\newtheorem{example}{Example}
\begin{document}
\begin{frontmatter}

\title{Quantifying non-monotonicity of functions and the~lack of positivity in signed measures}

\author[a]{\inits{Yu.}\fnm{Youri}\snm{Davydov}\corref{cor1}}\email{davydov.youri@gmail.com}
\cortext[cor1]{Corresponding author.}
\author[b]{\inits{R.}\fnm{Ri\v{c}ardas}\snm{Zitikis}}\email{zitikis@stats.uwo.ca}

\address[a]{Chebyshev Laboratory, St.\,Petersburg State University, Vasilyevsky Island, St.\,Petersburg 199178, Russia}
\address[b]{School of Mathematical and Statistical Sciences, Western University, London, Ontario~N6A~5B7, Canada}

\markboth{Yu. Davydov, R. Zitikis}{Quantifying non-monotonicity of functions}

\begin{abstract}
In various research areas related to decision making, problems and their
solutions frequently rely on certain functions being monotonic. In the case
of non-monotonic functions, one would then wish to quantify their lack of
monotonicity. In this paper we develop a method designed specifically for
this task, including quantification of the lack of positivity, negativity,
or sign-constancy in signed measures. We note relevant applications in
Insurance, Finance, and Economics, and discuss some of them in detail.
\end{abstract}

\begin{keywords}
\kwd{Non-monotonic functions}
\kwd{signed measures}
\kwd{Hahn and Jordan decompositions}
\kwd{weighted premium}
\kwd{risk measure}
\kwd{gain--loss ratio}
\end{keywords}
\begin{keywords}[2010]
\kwd{28E05}
\kwd{26A48}
\kwd{62P05}
\kwd{97M30}
\end{keywords}



\received{16 June 2017}
\revised{5 September 2017}
\accepted{5 September 2017}
\publishedonline{28 September 2017}
\end{frontmatter}

\section{Introduction}
In various research areas such as Economics, Finance, Insurance, and
Statistics, problems and their solutions frequently rely on certain
functions being monotonic, as well as on determining the degree of
their monotonicity, or lack of it. For example, the notion of profit\vadjust{\eject}
seekers in Behavioural Economics and Finance is based on increasing
utility functions, which can have varying shapes and thus characterize
subclasses of profit seekers (e.g., \cite{GilMar2009}). In
Reliability Engineering and Risk Assessment (e.g., \cite{LiLi2013}), a
number of notions such as hazard-rate and likelihood-ratio orderings
rely on monotonicity of the ratios of certain functions. The presence
of insurance deductibles and policy limits often change the pattern of
monotonicity of insurance losses (e.g., \cite{Braetal2015}). In
the literature on Statistical Inference, the so-called
monotone-likelihood-ratio family plays an important role.

Due to these and a myriad of other reasons, researchers quite often
restrict themselves to function classes with pre-specified monotonicity
properties. But one may not be comfortable with this element of
subjectivity and would therefore prefer to rely on data-implied shapes
when making decisions. To illustrate the point, we recall, for example,
the work of Bebbington et al. \cite{Bebetal2011} who specifically set out to
determine whether mortality continues to increase, or starts to
decelerate, after a certain species-related late-life age. This is
known in the gerontological literature as the late-life mortality
deceleration phenomenon. Naturally, we refrain from elaborating on this
complex topic and refer for details and further references to the
aforementioned paper.

Monotonicity may indeed be necessary for certain results or properties
to hold, but there are also many instances when monotonicity is just a
sufficient condition. In such cases, a natural question arises: can we
depart from monotonicity and still have valid results? Furthermore, in
some cases, monotonicity may not even be expected to hold, though
perhaps be desirable, and so developing techniques for quantifying the
lack of monotonicity becomes of interest. Several results in this
direction have recently been proposed in the literature (e.g.,
\cite{DavZit2005,YitSch2013,QoyZit2014,QoyZit2015}; and references therein). All in all, these are some of the
problems, solutions to which ask for indices that could be used to assess
monotonicity, or lack of it. In the following sections we introduce and
discuss several such indices, each designed to reveal different
monotonicity aspects.\looseness=1

We have organized the rest of the paper as follows. In Section~\ref{motive} we present a specific example, driven by insurance and
econometric considerations, which not only motivates present research
but also highlights the main idea adopted in this paper. In Section~\ref{section-2}
we introduce and discuss indices of lack of increase, decrease, and
monotonicity. We also suggest a convenient numerical procedure for a
quick calculation of the indices at any desirable precision. In
Section~\ref{sec4} we introduce orderings of functions according to the values
of their indices and argue in favour of using normalized versions of
the indices. In Section~\ref{section-strict} we introduce a stricter
notion of ordering, and in Section~\ref{section-hj} we develop a theory
for quantifying the lack of positivity (or negativity) in signed
measures. Section~\ref{section-conclude} concludes the paper.

\section{Motivating example}
\label{motive}

Insurance losses are non-negative random variables $X\ge0$, and the
expected value $\mathbf{E}[X]$ is called the net premium. All
practically-sound premium calculation principles (pcp's), which are
functionals $\pi$ assigning non-negative finite or infinite values to
$X$'s, are such that $\pi[X]\ge\mathbf{E}[X]$. The latter property is
called non-negative loading of $\pi$.\vadjust{\eject}

As an example, consider the following `dual' version of the weighted
pcp (\cite{FurZit2008,FurZit2009}; and references therein):
%
\begin{equation}
\label{0pi_w} \pi_w[X]=\frac{\textbf{E}[F^{-1}(U) w(U)]}{\textbf{E}[w(U)]},
\end{equation}
where $U$ is the random variable uniform on $[0,1]$, $F^{-1}$ is the
quantile function of $X$ defined by the equation $F^{-1}(p)=\inf\{
x~:~ F(x)\ge p\}$, and $w:[0,1]\to[0,\infty]$ is
an appropriately chosen weight function, whose illustrative examples
will be provided in a moment. We of course assume that the two
expectations in the definition of $\pi_w$ are well-defined and finite,
and the denominator is not zero.

When dealing with insurance losses, researchers usually choose
non-decrea\-sing weight functions in order to have non-negative loading
of $\pi_w$. For example, $w(t)=\mathbf{1}\{t>p \}$ for any parameter
$p\in(0,1)$ is non-decreasing, and it turns $\pi_w$ into the average
value at risk, also known as tail conditional expectation. Another
example is $w(t)=\nu(1-t)^{\nu-1}$ with parameter $\nu>0$. If $\nu\in
(0,1]$, then $w$ is non-decreasing, and $\pi_w$ becomes the
proportional hazards pcp \cite{Wan1995,Wan1996}. If $\nu\ge1$, then $w$
is non-increasing, and $\pi_w$ becomes the (absolute) $S$-Gini index of
economic equality (e.g., \cite{ZitGas2002}; and references therein).

Note that the pcp $\pi_w$ can be rewritten as the weighted integral
%
\begin{equation}
\label{0pi_w-small} \pi_w=\int_0^1
F^{-1}(t)w^*(t)\,\xch{\mathrm{d}t,}{\mathrm{d}t}
\end{equation}
with the normalized weight function
$w^*(t)=w(t)/ \int_0^1 w(u)\mathrm{d}u$, which is a probability density
function, because $w(t)\ge0$ for all $t\in[0,1]$ and $\int_0^1
w(u)\mathrm{d}u\in(0,\infty)$. This representation of $\pi_w$ connects
our present research with the dual utility theory (\cite{Yaa1987,Qui1993}; and references therein) that has arisen as a prominent
counterpart to the classical utility theory of von Neuman and
Morgenstern \cite{NeuMor1944}. It is also important to mention that in Insurance,
integral (\ref{0pi_w-small}) plays a very prominent role and is known
as the distortion risk measure (e.g., \cite{Denetal2005}; and
references therein).

In general, the function $w$ and its normalized version $w^*$ may not
be monotonic, as it depends on the shape of the probability density
function of the random variable $W \in[0,1]$ in the following
reformulation of $\pi_w$:
%
\begin{equation}
\label{0pi_w-5} \pi_w[X]=\textbf{E} \xch{\bigl[F^{-1}(W) \bigr].}{\bigl[F^{-1}(W) \bigr]}
\end{equation}
In the econometric language, $\textbf{E}[F^{-1}(W)]$ means the average
income (assuming that $X$ stands for `income') possessed by individuals
whose positions on the society's income-percentile scale are modelled
by $W$, which is, naturally, a random variable.

Hence, we are interested when $\textbf{E}[F^{-1}(W)]$ is at least
$\textbf{E}[W]$ (insurance perspective) or at most $\textbf{E}[W]$
(income inequality perspective). In view of equations (\ref{0pi_w}) and
(\ref{0pi_w-small}), our task reduces to verifying whether or not
%
\begin{equation}
\mathbf{cov} \bigl[F^{-1}(U),w(U) \bigr]\ge0. \label{pcp-2}
\end{equation}
If the function $w$ happens to be non-decreasing (as in insurance),
then bound (\ref{pcp-2}) holds (e.g., \cite{Leh1966}). For example, with
parameter $\lambda>0$, the weight function $ w(s)=s^{\lambda} $ leads
to the size-biased pcp,
$w(s)=e^{\lambda s }$ to the Esscher pcp,
$w(s)=1-e^{-\lambda s }$ to the Kamps pcp, and the already noted function
$w(s)=\mathbf{1}\{s>\lambda\}$ leads to the average value at risk.

As already noted, broader contexts than that of classical insurance
suggest various shapes of probability distortions and thus lead to
functions $w$ that are not necessarily monotonic. Indeed, in view of
representation (\ref{0pi_w-5}), the average income of individuals
depends on the distribution of $W$, whose shape is governed by societal
opportunities that the individuals are exposed to. A natural question arises:
\[
\textrm{What shapes of $w$ could ensure property (\ref{pcp-2})?}
\]
To answer this question in an illuminating way, let $w$ be absolutely
continuous and such that $w(0)=0$, which are sound assumptions from the
practical point of view. Denote the density of $w$ by $w'$, and let
$\mathbf{1}_{\{u> t\}}$ be the indicator. We have
\begin{align*}
\mathbf{cov} \bigl[F^{-1}(U),w(U) \bigr] &=\mathbf{cov}
\Biggl[F^{-1}(U),\int_0^{U}w'(t)\,\mathrm{d}t \Biggr]
\\
&=\int_0^{1}\mathbf{cov} \bigl[F^{-1}(U),
\mathbf{1}_{\{U> t\}} \bigr] w'(t)\,\mathrm{d}t.
\end{align*}
The function $v(t)=\mathbf{cov}[F^{-1}(U),\mathbf{1}_{\{U> t\}}]$ is
non-negative (e.g., \cite{Leh1966}), and so is also the integral $\theta
=\int_0^{1}v(t)dt$, which makes $v(t)/\theta$ a probability density
function. Denote the corresponding distribution function by $V$. We
have the equations
\begin{align*}
\mathbf{cov} \bigl[F^{-1}(U),w(U) \bigr] &=\theta\int
_0^{1}w'\,\mathrm{d}V
\\
&= \theta\int_0^{1}w'\circ
V^{-1}\,\mathrm{d}\lambda, \label{pcp-3a}
\end{align*}
where $\lambda$ denotes the Lebesgue measure. Hence, property (\ref
{pcp-2}) is equivalent to the bound
$\int_0^{1}g\mathrm{d}\lambda\ge0$ with $g=w'\circ V^{-1}$, and it
can of course be rewritten as the inequality
%
\begin{equation}
\int_0^{1}g^{+}\,\mathrm{d}\lambda\ge
\int_0^{1 }g^{-}\,\mathrm{d}\lambda,
\label{pcp-3bi}
\end{equation}
where $g^{+}=\max\{g , 0\} $ and $g^{-}=\max\{ -g, 0\}$. Bound (\ref
{pcp-3bi}) says that, in average with respect to the Lebesgue measure,
the positive part $g^{+}$ must be larger than the negative part $g^{-}$.

Those familiar with asset pricing will immediately see how inequality
(\ref{pcp-3bi}), especially when reformulated as
%
\begin{equation}
{\int_0^{1}g^{+}\,\mathrm{d}\lambda\over\int_0^{1 }g^{-}\mathrm
{d}\lambda} \ge \xch{1}{1,} \label{pcp-3b}
\end{equation}
is connected to the gain--loss ratio \cite{BerLed2000}, as
well as to the Omega ratio \cite{KeaSha2002}. Furthermore,
we can reformulate bound (\ref{pcp-3b}) as
%
\begin{equation}
{\int_0^{1}g^{+}\,\mathrm{d}\lambda\over\int_0^{1 }\llvert g\rrvert \,\mathrm{d}\lambda
} \ge{1\over2}, \label{pcp-3b-p}
\end{equation}
with the ratio on the left-hand side being equal to $0$ when the
function $g$ is non-positive, and equal to $1$ when $g$ is
non-negative. Hence, bound (\ref{pcp-3b-p}) says that the ratio must be
at least $1/2$, which means that, in average, the function $g$ must be
more positive than negative.

The above interpretations have shaped our considerations in the present paper,
and have led toward the construction of monotonicity indices that we introduce and
discuss next.

\section{Assessing lack of monotonicity in functions}
\label{section-2}

We are interested in assessing monotonicity of a function $g_0$ on an
interval $[a,b]$. Since shifting the function up or down, left or
right, does not distort its monotonicity, we therefore `standardize'
$g_0$ into
%
\begin{equation}
\label{gg} g(x)=g_0(x+a)-g_0(a)
\end{equation}
defined on the interval $[0,y]$ with $y=b-a$. Note that $g$ satisfies
the boundary condition $g(0)=0$. We assume that $g_0$ and thus $g$ are
absolutely continuous.

Let $ \mathcal{F}_y$ denote the set of all absolutely continuous
functions $f$ on the interval $[0,y]$ such that $f(0)=0$. Denote the
total variation of $f\in\mathcal{F}_y$ on the interval $[0,y]$ by
$\Vert f \Vert_y $, that is, $\Vert f \Vert_y =\int_0^y |f'|\mathrm
{d}\lambda$. Furthermore, let $\mathcal{F}^{+}_y $ denote the set of
all $f\in\mathcal{F}_y$ that are non-decreasing. For any $g\in\mathcal
{F}_y$, we define its index of lack of increase (LOI) as the distance
between $g$ and the set $\mathcal{F}^{+}_y $, that is,
%
\begin{equation}
\mathrm{LOI}_y(g) =\inf \Biggl\{ \int_0^y
\bigl\llvert g'-f' \bigr\rrvert \,\mathrm{d}\lambda~:~ f
\in\mathcal {F}^{+}_y \Biggr\} . \label{def-00}
\end{equation}
Obviously, if $g$ is non-decreasing, then $\mathrm{LOI}_y(g)=0$, and
the larger the value of $\mathrm{LOI}_y(g)$, the farther the function
$g$ is from being non-decreasing on the interval $[0,y]$. For the
function $g_0$, which is where $g$ originates from, the LOI of $g_0$ on
the interval $[a,b]$ is
\[
\mathrm{LOI}_{[a,b]}(g_0):=\mathrm{LOI}_y(g).
\]
(Throughout the paper we occasionally use `$:=$,' when a need arises to
emphasize that certain equations are by definition.) Determining
$\mathrm{LOI}_y(g)$ using definition (\ref{def-00}) is not
straightforward. To facilitate the task, we next give an integral
representation of the index.

\begin{thm}\label{theorem21}
The infimum in definition (\ref{def-00}) is attained at a function
$f_1\in\mathcal{F}^{+}_y$ such that $f_1'=(g')^{+}$, and thus
%
\begin{equation}
\label{loi-2} \mathrm{LOI}_y(g)=\int_0^y
\bigl(g' \bigr)^{-}\,\mathrm{d}\lambda.
\end{equation}
When $g$ originates from $g_0$ via equation (\ref{gg}), we have
%
\begin{equation}
\label{loi-2a} \mathrm{LOI}_{[a,b]}(g_0)=\int
_a^b \bigl(g_0'
\bigr)^{-}\,\mathrm{d}\lambda.
\end{equation}
\end{thm}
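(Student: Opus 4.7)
The plan is to exhibit an explicit minimizer and then show it is optimal by a pointwise argument on the integrand. Let $f_1$ be defined by $f_1(x)=\int_0^x (g'(t))^{+}\,\mathrm{d}t$. First I would verify that $f_1\in\mathcal{F}^{+}_y$: it is absolutely continuous as an indefinite integral of an $L^1$ function (since $(g')^{+}\le |g'|$ and $\|g\|_y<\infty$), it satisfies $f_1(0)=0$ by construction, and it is non-decreasing because its derivative $(g')^{+}$ is non-negative almost everywhere. Plugging $f_1$ into the functional, the integrand becomes $|g'-(g')^{+}|=(g')^{-}$, which gives the upper bound $\mathrm{LOI}_y(g)\le \int_0^y (g')^{-}\,\mathrm{d}\lambda$.

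Next I would establish the matching lower bound. The key observation is that any $f\in\mathcal{F}^{+}_y$ is absolutely continuous and non-decreasing, hence $f'\ge 0$ almost everywhere. I would then argue pointwise: on the set where $g'\ge 0$ we trivially have $|g'-f'|\ge 0=(g')^{-}$, while on the set where $g'<0$ we have $f'\ge 0>g'$, so $|g'-f'|=f'-g'=f'+(g')^{-}\ge (g')^{-}$. In either case $|g'-f'|\ge (g')^{-}$ almost everywhere, and integrating over $[0,y]$ yields
\begin{equation*}
\int_0^y |g'-f'|\,\mathrm{d}\lambda \;\ge\; \int_0^y (g')^{-}\,\mathrm{d}\lambda
\end{equation*}
for every admissible $f$. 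Taking the infimum on the left gives $\mathrm{LOI}_y(g)\ge \int_0^y (g')^{-}\,\mathrm{d}\lambda$, which combined with the previous upper bound proves equation (\ref{loi-2}) and shows $f_1$ realizes the infimum.

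Finally, formula (\ref{loi-2a}) follows immediately from (\ref{loi-2}) by the change of variables $x\mapsto x+a$: the relation $g(x)=g_0(x+a)-g_0(a)$ from (\ref{gg}) gives $g'(x)=g_0'(x+a)$ almost everywhere, hence $(g')^{-}(x)=(g_0')^{-}(x+a)$, and translating the integration interval from $[0,y]$ to $[a,b]$ yields the claim.

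I do not anticipate any real obstacle here; the only mildly delicate point is justifying $f'\ge 0$ a.e.\ for non-decreasing absolutely continuous $f$, which is the standard consequence of absolute continuity together with monotonicity (the derivative exists a.e.\ and equals the density of the associated absolutely continuous measure, which is non-negative). Everything else is a pointwise comparison of non-negative quantities followed by Lebesgue integration.
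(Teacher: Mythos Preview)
Your argument is correct. The paper itself does not give a direct proof of this theorem; it simply remarks that the result is ``easy to prove directly'' and instead deduces it as a special case of the general signed-measure result (Theorem~\ref{th-0}), where one takes $\nu$ to be the measure on $[0,y]$ with density $g'$, so that $\nu^{+}$ has density $(g')^{+}$, $\nu^{-}$ has density $(g')^{-}$, and the Hahn sets $\Omega^{\pm}$ are $\{g'\ge 0\}$ and $\{g'<0\}$. Your direct proof is precisely the specialization of that argument to densities: the upper bound comes from plugging in the candidate $f_1'=(g')^{+}$, and the lower bound comes from the pointwise inequality on the set $\{g'<0\}$, which is exactly the role played by $\Omega^{-}$ in the paper's proof of Theorem~\ref{th-0}. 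So the route is the same in substance; you have simply carried it out in the concrete function setting rather than invoking the abstract version.
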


Theorem~\ref{theorem21}, though easy to prove directly, follows
immediately from a more general result, Theorem~\ref{th-0} of Section~\ref{section-hj}, and we thus do not provide any more details. The
index of lack of decrease (LOD) is defined analogously. Namely, in the
computationally convenient form, it is given by the equation
\[
\mathrm{LOD}_y(g)=\int_0^y
\bigl(g' \bigr)^{+}\,\mathrm{d}\lambda .
\]
When $g$ originates from $g_0$ via equation (\ref{gg}), we have
\[
\mathrm{LOD}_{[a,b]}(g_0)=\int_a^b
\bigl(g_0' \bigr)^{+}\,\mathrm{d}\lambda.
\]
In turn, the index of lack of monotonicity (LOM) of $g$ is given by the equation
%
\begin{equation}
\label{ind-mon} \mathrm{LOM}_y(g) =2 \min \bigl\{
\mathrm{LOI}_y(g), \mathrm{LOD}_y(g) \bigr\},
\end{equation}
and when $g$ originates from $g_0$ via equation (\ref{gg}), we have
%
\begin{equation}
\label{ind-mon-2} \mathrm{LOM}_{[a,b]}(g_0)=2 \min \bigl\{
\mathrm{LOI}_{[a,b]}(g_0), \mathrm{LOD}_{[a,b]}(g_0)
\bigr\} .
\end{equation}
The reason for doubling the minimum on the right-hand sides of
definitions (\ref{ind-mon}) and (\ref{ind-mon-2}) will become clear
from properties below.
\begin{enumerate}
%
\item[A1)] The index $\mathrm{LOI}_y$ is translation
invariant, that is, $\mathrm{LOI}_y(g+\alpha)=\mathrm{LOI}_y(g)$ for
every constant $\alpha\in\mathbf{R}$. The index $\mathrm{LOD}_y$ is
also translation invariant.

\item[A2)]The index $\mathrm{LOI}_y$ is positively
homogeneous, that is,
$\mathrm{LOI}_y(\beta g )=\beta\mathrm{LOI}_y(g)$ for every constant
$\beta\ge0$. The index $\mathrm{LOD}_y$ is also positively homogeneous.
\item[A3)]$\mathrm{LOI}_y(\beta g )=(-\beta) \mathrm
{LOD}_y(g)$ for every negative constant $\beta<0$, and thus in
particular, $\mathrm{LOI}_y(- g )=\mathrm{LOD}_y(g)$.
\item[A4)]\label{prop-a4} $\mathrm{LOI}_y(g )+ \mathrm
{LOD}_y(g)=\Vert g \Vert_y $. Consequently, we have the following two
observations:
\begin{enumerate}
%
\item
$\mathrm{LOI}_y(g )$ and $\mathrm{LOD}_y(g)$ do not exceed $ \Vert g
\Vert_y $, with both indices achieving the upper bound. Namely, $\mathrm
{LOI}_y(g)= \Vert g \Vert_y $ whenever $(g')^{+}\equiv0$, and $\mathrm
{LOD}_y(g)= \Vert g \Vert_y $ whenever $(g')^{-}\equiv0$.
\item
$\min \{ \mathrm{LOI}_y(g), \mathrm{LOD}_y(g) \}\le\Vert g
\Vert_y /2$ and thus $\mathrm{LOM}_y(g) \le\Vert g \Vert_y $, which
justifies the use of the factor $2$ in definitions (\ref{ind-mon}) and
(\ref{ind-mon-2}).
\end{enumerate}
\end{enumerate}

An illustrative example follows.

\begin{example}\label{example-21}\rm
Consider the functions $g_0(z)=\sin(z)$ and $h_0(z)=\cos(z)$ on $[-\pi
/2,\pi]$. Neither of them is monotonic on the interval, but a visual
inspection of their graphs suggests that sine is closer to being
increasing than cosine, which can of course be viewed as a subjective
statement. To substantiate it, we employ the above introduced indices.
First, by lifting and shifting, we turn sine into $g(x)=1-\cos(x)$ and
cosine into $h(x)=\sin(x)$. Since $g'(x)=\sin(x)$ and $h'(x)=\cos(x)$,
we have
%
\begin{gather}
\label{g1} \int_0^{3\pi/2} \bigl(g'\bigr)^{-}\,\mathrm{d}\lambda=1 \quad\textrm{and} \quad \int_0^{3\pi/2} \bigl(g'\bigr)^{+} \,\mathrm{d}\lambda=2,\\
\label{h2} \int_0^{3\pi/2} \bigl(h'\bigr)^{-}\,\mathrm{d}\lambda=2 \quad\textrm{and} \quad \int_0^{3\pi/2} \bigl(h'\bigr)^{+} \,\mathrm{d}\lambda=1.
\end{gather}
Consequently,
\begin{align*}
\mathrm{LOI}_{[-\pi/2,\pi]}(g_0)=\mathrm{LOI}_{3\pi/2}(g)=1,& \quad \mathrm{LOI}_{[-\pi/2,\pi]}(h_0)=\mathrm{LOI}_{3\pi/2}(h)=2,\\
\mathrm{LOD}_{[-\pi/2,\pi]}(g_0)=\mathrm{LOD}_{3\pi/2}(g)=2,& \quad \mathrm{LOD}_{[-\pi/2,\pi]}(h_0)=\mathrm{LOD}_{3\pi/2}(h)=1,\\
\mathrm{LOM}_{[-\pi/2,\pi]}(g_0)=\mathrm{LOM}_{3\pi/2}(g)=2,& \quad \mathrm{LOM}_{[-\pi/2,\pi]}(h_0)=\mathrm{LOM}_{3\pi/2}(h)=2.
\end{align*}
Hence, for example, sine is at the distance $1$ from the set of all
non-decreasing functions on the noted interval, whereas cosine is at
the distance $2$ from the same set. Note also that the total variations
$\int_0^{3\pi/2} |g'|\mathrm{d}\lambda$ and $\int_0^{3\pi/2}
|h'|\mathrm{d}\lambda$ of the two functions are the same, equal to
$3$. This concludes Example~\ref{example-21}.
\end{example}

\begin{note}\label{note-comp}\rm
Example~\ref{example-21} is based on functions for which
the four integrals in equations (\ref{g1}) and (\ref{h2}) are easy to
calculate, but functions arising in applications are frequently quite
unwieldy. For this, we need a numerical procedure for calculating the
integral $\int_0^y H (f')\mathrm{d}\lambda$ for various
transformations $H$, such as $H(x)=x^{-}$ and $H(x)=x^{+}$. A~convenient way is as follows:
\begin{align*}
\int_0^y H \bigl(f' \bigr)\,\mathrm{d}\lambda &\approx\sum_{n=1}^N
{y\over N} H \biggl( {N\over y} \biggl\{ f \biggl(
{n\over N}y \biggr) -f \biggl( {n-1\over N}y \biggr)
\biggr\} \biggr)
\\
&=\sum_{n=1}^N {b-a\over N} H
\biggl({N\over b-a} \biggl\{ f_0 \biggl( a+
{n\over N}(b-a) \biggr)\\
&\quad -f_0 \biggl( a+ {n-1\over N}(b-a) \biggr) \biggr\} \biggr),
\end{align*}
where $f_0$ is the underlying function (on the interval $[a,b]$) and
$f$ is the shifted-and-lifted function (on the interval $[0,y]$)
defined by the equation $f(x)=f_0(x+a)-f_0(a)$ for all $x\in[0,y]$
with $y=b-a$.
\end{note}

The use of the integral $\int_0^y H (f')\mathrm{d}\lambda$ in Note~\ref
{note-comp} hints at the possibility of distances other than $L_1$-norms when defining indices. One may indeed wish to de-emphasize
small values of $f'$ and to emphasize its large values when defining
indices. In a simple way, this can be achieved by taking the $p$-th
power of $(f')^{-}$, $(f')^{+}$ and $f'$ for any $p\ge1$. This
argument leads us to the $L_p$-type counterpart of minimization problem
(\ref{def-00}) which can be solved along the same path as that used in
the proof of Theorem~\ref{theorem21}. It is remarkable that the
minimizing function does not depend on the choice of the metric and
remains equal to $(f')^{+}$. Consequently, for example, the following
$L_p$-index of lack of increase arises:
\[
\mathrm{LOI}_{p,y}(f)= \Biggl( \int_0^y
\bigl( \bigl(f' \bigr)^{-} \bigr)^p\,\mathrm{d}
\lambda \Biggr)^{1/p}.
\]
Other than $L_p$-norms can also be successfully explored, and this is
important in applications, where no $p$-th power may adequately
(de-)emphasize parts of $f'$. The phenomenon has prominently
manifested, for example, in Econometrics
(e.g., \cite{Cha1988,Zit2003}; and references therein). In such cases, more complexly
shaped functions are typically used, including those $H:[0,\infty) \to
[0,\infty)$ with $H(0)=0$ that give rise to the class of
Birnbaum--Orlicz (BO) spaces.

\section{Monotonicity comparisons and normalized indices}\label{sec4}

The index values in Example~\ref{example-21} suggest that on the noted
interval, sine is more increasing than cosine, because sine is closer
to the set $\mathcal{F}^{+}_y$ than cosine is. In general, given two
functions $g_1$ and $h_1$ on the interval $[0,y]$, we can say that the
function $g_1$ is more non-decreasing than $h_1$ on the interval
whenever $\mathrm{LOI}_y(g_1)\le\mathrm{LOI}_y(h_1)$. Likewise, we can
say that the function $g_2$ is more non-increasing than $h_2$ on the
interval $[0,y]$ whenever $\mathrm{LOD}_y(g_2)\le\mathrm{LOD}_y(h_2)$.
But an issue arises with these definitions because for a given pair of
functions $g$ and $h$, the property $\mathrm{LOI}_y(g)\le\mathrm
{LOI}_y(h)$ may not be equivalent to $\mathrm{LOD}_y(g)\ge\mathrm
{LOD}_y(h)$. Though it may look strange at first sight, this
non-reflexivity is natural because the total variations of the
functions $g$ and $h$ on the interval $[0,y]$ may not be equal, and in
such cases, comparing non-monotonicities of $g$ and $h$ is not meaningful.

If, however, the total variations of $g$ and $h$ are equal on the
interval $[0,y]$, then $\mathrm{LOI}_y(g)\le\mathrm{LOI}_y(h)$ if and
only if $\mathrm{LOD}_y(g)\ge\mathrm{LOD}_y(h)$. This suggests that in
order to achieve this `if and only if' property in general, we need to
normalize the indices, which gives rise to the following definitions
\[
\mathrm{LOI}^*_y(g)={\int_0^y (g')^{-}\,\mathrm{d}\lambda \over\int_0^y
\llvert g'\rrvert \,\mathrm{d}\lambda } \quad\textrm{and} \quad
\mathrm{LOD}^*_y(g)=\xch{{\int_0^y (g')^{+}\,\mathrm{d}\lambda \over\int_0^y
\llvert g'\rrvert \,\mathrm{d}\lambda },}{{\int_0^y (g')^{+}\,\mathrm{d}\lambda \over\int_0^y
\llvert g'\rrvert \,\mathrm{d}\lambda }}
\]
of the normalized indices of lack of increase and decrease,
respectively. Obviously,
%
\begin{equation}
\label{add-1} \mathrm{LOI}^*_y(g)+\mathrm{LOD}^*_y(g)=1,
\end{equation}
and thus $\mathrm{LOI}^*_y(g)\le\mathrm{LOI}^*_y(h)$ if and only if
$\mathrm{LOD}^*_y(g)\ge\mathrm{LOD}^*_y(h)$. Furthermore, the
normalized index of lack of monotonicity is $\mathrm{LOM}^*_y(g)
:=2\min \{ \mathrm{LOI}^*_y(g), \mathrm{LOD}^*_y(g) \}$, which
we rewrite as
%
\begin{equation}
\label{add-1.a} \mathrm{LOM}^*_y(g) = 1-{\llvert g(y)\rrvert  \over\int_0^y \llvert g'\rrvert \,\mathrm{d}\lambda}
\end{equation}
using the identity $\min\{u,v\}=(u+v-|u-v|)/2$ that holds for all real
numbers $u$ and $v$. In terms of the original function $g_0$ on the
interval $[a,b]$, we have
\[
\mathrm{LOI}^*_{[a,b]}(g_0)={\int_a^b (g'_0)^{-}\,\mathrm{d}\lambda
\over\int_a^b \llvert g'_0\rrvert \,\mathrm{d}\lambda }, \qquad
\mathrm{LOD}^*_{[a,b]}(g_0)={\int_a^b (g'_0)^{+}\,\mathrm{d}\lambda
\over\int_a^b \llvert g'_0\rrvert \,\mathrm{d}\lambda },
\]
and
\[
\mathrm{LOM}^*_{[a,b]}(g_0) = 1-{\llvert g_0(b)-g_0(a)\rrvert  \over\int_a^b \llvert g'_0\rrvert \,\mathrm{d}\lambda} .
\]
Obviously, $\mathrm{LOI}^*_{[a,b]}(g_0)=\mathrm{LOI}^*_y(g)$,
$\mathrm{LOD}^*_{[a,b]}(g_0)=\mathrm{LOD}^*_y(g)$, and
$\mathrm{LOM}^*_{[a,b]}(g_0)=\mathrm{LOM}^*_y(g)$. To illustrate the
normalized indices, we continue Example~\ref{example-21}.

\begin{example}\label{example-31}\rm
Recall that we are dealing with the functions $g_0(z)=\sin(z)$ and
$h_0(z)=\cos(z)$ on the interval $[-\pi/2,\pi]$. We transform them
into the functions $g(x)=1-\cos(x)$ and $h(x)=\sin(x)$ on the interval
$[0,3\pi/2 ]$. From equations (\ref{g1}) and (\ref{h2}), we see that
the total variations $\int_0^{3\pi/2} |g'|\mathrm{d}\lambda$ and $\int_0^{3\pi/2} |h'|\mathrm{d}\lambda$ are equal to $3$, and so we have
the equations:
\begingroup
\abovedisplayskip=7.5pt
\belowdisplayskip=7.5pt
\begin{align*}
\mathrm{LOI}^*_{[-\pi/2,\pi]}(g_0)=\mathrm{LOI}^*_{3\pi/2}(g)={1\over3}, & \quad \mathrm{LOI}^*_{[-\pi/2,\pi]}(h_0)=\mathrm{LOI}^*_{3\pi/2}(h)={2\over3},\\
\mathrm{LOD}^*_{[-\pi/2,\pi]}(g_0)=\mathrm{LOD}^*_{3\pi/2}(g)={2\over3}, & \quad \mathrm{LOD}^*_{[-\pi/2,\pi]}(h_0)=\mathrm{LOD}^*_{3\pi/2}(h)={1\over3},\\
\mathrm{LOM}^*_{[-\pi/2,\pi]}(g_0)=\mathrm{LOM}^*_{3\pi/2}(g)={2\over3}, & \quad \mathrm{LOM}^*_{[-\pi/2,\pi]}(h_0)=\mathrm{LOM}^*_{3\pi/2}(h)={2\over3}.
\end{align*}
\endgroup
The numerical procedure of Note~\ref{note-comp} can easily be employed
to calculate these normalized indices. This concludes Example~\ref{example-31}.
\end{example}

Next are properties of the normalized indices.
\begin{enumerate}
\item[\textrm{B}1)] The three indices $\mathrm{LOI}^*_y(g)$, $\mathrm
{LOD}^*_y(g)$, and $\mathrm{LOM}^*_y(g)$ are normalized, that is, take
values in the unit interval $[0,1]$, with the following special cases:
\begin{enumerate}
%
\item
$\mathrm{LOI}^*_y(g)=0$ if and only if $(g')^{-}\equiv0$, that is,
when $g$ is non-decreasing everywhere on $[0,y]$, and $\mathrm
{LOI}^*_y(g)=1$ if and only if $(g')^{+}\equiv0$, that is, when $g$ is
non-increasing everywhere on $[0,y]$.
\item
$\mathrm{LOD}^*_y(g)=0$ if and only if $(g')^{+}\equiv0$, that is,
when $g$ is non-increasing everywhere on $[0,y]$, and $\mathrm
{LOD}^*_y(g)=1$ if and only if $(g')^{-}\equiv0$, that is, when $g$ is
non-decreasing everywhere on $[0,y]$.
\item
$\mathrm{LOM}^*_y(g)=0$ if and only if $g$ is either non-decreasing
everywhere on $[0,y]$ or non-increasing everywhere on $[0,y]$, and
$\mathrm{LOM}^*_y(g)=1$ if and only if $\mathrm{LOI}^*_y(g)=\mathrm
{LOD}^*_y(g)$ (recall equation (\ref{add-1})).
\end{enumerate}

\item[\textrm{B}2)] The three indices $\mathrm{LOI}^*_y(g)$, $\mathrm
{LOD}^*_y(g)$, and $\mathrm{LOM}^*_y(g)$ are translation invariant,
that is, $\mathrm{LOI}^*_y(g+\alpha)=\mathrm{LOI}^*_y(g)$ for every
constant $\alpha\in\mathbf{R}$, and analogously for $\mathrm
{LOD}^*_y(g)$ and $\mathrm{LOM}^*_y(g)$.
\item[\textrm{B}3)]
\begin{enumerate}
%
\item
The three indices $\mathrm{LOI}^*_y(g)$, $\mathrm{LOD}^*_y(g)$, and
$\mathrm{LOM}^*_y(g)$ are positive-scale invariant, that is, $\mathrm
{LOI}^*_y(\beta g )=\mathrm{LOI}^*_y(g)$ for every positive constant
$\beta>0$, and analogously for $\mathrm{LOD}^*_y(g)$ and $\mathrm{LOM}^*_y(g)$.
\item
Moreover, $\mathrm{LOM}^*_y(g)$ is negative-scale invariant, that is,
$\mathrm{LOM}^*_y(\beta g )=\mathrm{LOM}^*_y(g)$ for every negative
constant $\beta< 0$, and thus, in general,\break $\mathrm{LOM}^*_y(\beta g
)=\mathrm{LOM}^*_y(g)$ for every real constant $\beta\ne0$ (recall
equation~(\ref{add-1.a})).
\end{enumerate}
\item[\textrm{B}4)]
$\mathrm{LOD}^*_y(- g )=\mathrm{LOI}^*_y(g)$, and thus $\mathrm
{LOD}^*_y(\beta g )=\mathrm{LOI}^*_y(g)$ for every negative constant
$\beta<0$.
\end{enumerate}

We next use the above indices to introduce three new orderings:
\begin{enumerate}
\item[C1)]
The function $g$ is more non-decreasing than $h$ on the interval
$ [0,y]$, denoted by $g \ge_{I,y} h$, if and only if $\mathrm
{LOI}^*_y(g)\le\mathrm{LOI}^*_y(h)$.
\item[C2)]
The function $g$ is more non-increasing than $h$ on the interval
$[0,y]$, denoted by $g \ge_{D,y} h$, if and only if $\mathrm
{LOD}^*_y(g)\le\mathrm{LOD}^*_y(h)$.
\item[C3)]
The function $g$ is more monotonic than $h$ on the interval $[0,y]$,
denoted by $g \ge_{M,y} h$, if and only if $\mathrm{LOM}^*_y(g)\le
\mathrm{LOM}^*_y(h)$.
\end{enumerate}

We see that on the interval $[0,y]$, the function $g$ is more
non-decreasing than $h$ if and only if the function $g$ is less
non-increasing than $h$. In other words, $g \ge_{I,y} h$ is equivalent
to $g \le_{D,y} h$, which we have achieved by introducing the
normalized indices.

\section{Stricter notion of comparison: a note}
\label{section-strict}

One of the fundamental notions of ordering random variables is that of
first-order stochastic dominance (e.g., \cite{Denetal2005,Lev2006,LiLi2013}). Similarly to this notation, our earlier
introduced orderings can be strengthened by first noting that the
integral $\int_0^y (g')^{-}\mathrm{d}\lambda$ is equal to $\int_0^{\infty} S^{-}_y(z\mid g) \mathrm{d}z$, where the function
\[
S^{-}_y(z\mid g)=\lambda \bigl\{x\in[0,y] ~:~
\bigl(g' \bigr)^{-}(x)> z \bigr\}
\]
counts the `time' that the function $(g')^{-}$ spends above the
threshold $z$ during the `time' period $[0,y]$. Likewise, we define the
`plus' version $S^{+}_y(z\mid g)$. This lead us to the following
definitions of ordering functions according to their monotonicity.
\begin{enumerate}
\item[D1)]
$g$ is more strictly non-decreasing than $h$ on the interval $[0,y]$,
denoted by $g \ge_{SI,y} h$, whenever
\[
{S^{-}_y(z\mid g)\over\int_0^y \llvert g'\rrvert \,\mathrm{d}\lambda } \le {S^{-}_y(z\mid h)\over\int_0^y \llvert h'\rrvert \,\mathrm{d}\lambda} \quad\textrm {for all}
\quad z> 0.
\]
\item[D2)]
$g$ is more strictly non-increasing than $h$ on the interval $[0,y]$,
denoted by $g \ge_{\mathit{SD},y} h$, whenever
\[
{S^{+}_y(z\mid g)\over\int_0^y \llvert g'\rrvert \,\mathrm{d}\lambda } \le {S^{+}_y(z\mid h) \over\int_0^y \llvert h'\rrvert \,\mathrm{d}\lambda} \quad\textrm{for all}
\quad z> 0.
\]
\end{enumerate}
Obviously, if $g \ge_{\mathit{SI},y} h $, then $ g \ge_{I,y} h$, and if $g \ge
_{\mathit{SD},y} h$, then $g \ge_{D,y} h$.

\section{Assessing lack of positivity in signed measures}
\label{section-hj}

We now take a path in the direction of general Measure Theory. Namely,
let $\varOmega$ be a set equipped with a sigma-algebra, and let $\mathcal
{M}$ denote the set of all (signed) measures $\nu$ defined on the
sigma-algebra. Furthermore, let $\mathcal{M}^{+} \subset\mathcal{M} $
be the subset of all positive measures. Given a signed-measure $\nu\in
\mathcal{M}$, we define its index of lack of positivity (LOP) by the equation
\[
\mathrm{LOP}(\nu) =\inf \bigl\{ \llVert\nu- \mu\rrVert~:~ \mu\in
\mathcal{M}^{+} \bigr\},
\]
where $\Vert\cdot\Vert$ denotes the total variation. Specifically,
with $(\varOmega^{-},\varOmega^{+})$ denoting a Hahn decomposition of $\varOmega
$, let $(\nu^{-},\nu^{+})$ be the Jordan decomposition of $\nu$. Note
that $\nu^{-}$ and $\nu^{+}$ are elements of $\mathcal{M}^{+}$. The
variation of $\nu$ is $|\nu|=\nu^{-}+\nu^{+}$, and its total variation
is $\Vert\nu\Vert=|\nu|(\varOmega)$. The following theorem provides an
actionable, and crucial for our considerations, reformulation of the
index $\mathrm{LOP}(\nu)$.

\begin{thm}\label{th-0}
The infimum in the definition of $\mathrm{LOP}(\nu)$ is attained on the
unique element of $\mathcal{M}^{+}$, which is the measure $\nu^{+}$,
and thus the $\mathrm{LOP}$ index can be written as
%
\begin{equation}
\label{th-0a} \mathrm{LOP}(\nu)=\bigl\llVert\nu^{-} \bigr\rrVert.
\end{equation}
\end{thm}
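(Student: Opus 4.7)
The plan is to prove the upper bound $\mathrm{LOP}(\nu)\le\|\nu^-\|$ by exhibiting a candidate minimizer, and then to prove the matching lower bound together with uniqueness by exploiting the Hahn decomposition to split an arbitrary $\mu\in\mathcal{M}^+$ into parts living on $\Omega^+$ and $\Omega^-$, so that total variation decomposes additively.

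First I would establish the upper bound. Since $\nu^+\in\mathcal{M}^+$, it is admissible in the infimum, and $\nu-\nu^+=-\nu^-$, so $\|\nu-\nu^+\|=\|\nu^-\|$. Hence $\mathrm{LOP}(\nu)\le\|\nu^-\|$.

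For the lower bound and uniqueness, I would fix an arbitrary $\mu\in\mathcal{M}^+$ and write
\[
\mu=\mu_{+}+\mu_{-},\qquad \mu_{+}(A):=\mu\bigl(A\cap\Omega^{+}\bigr),\qquad \mu_{-}(A):=\mu\bigl(A\cap\Omega^{-}\bigr),
\]
which are positive measures concentrated on $\Omega^{+}$ and $\Omega^{-}$, respectively. Since $\nu^{+}$ is concentrated on $\Omega^{+}$ and $\nu^{-}$ on $\Omega^{-}$, the signed measure
\[
\nu-\mu=\bigl(\nu^{+}-\mu_{+}\bigr)+\bigl(-\nu^{-}-\mu_{-}\bigr)
\]
is a sum of two mutually singular signed measures, supported on disjoint sets $\Omega^{+}$ and $\Omega^{-}$. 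Therefore the total variation splits additively:
\[
\|\nu-\mu\|=\bigl\|\nu^{+}-\mu_{+}\bigr\|+\bigl\|\nu^{-}+\mu_{-}\bigr\|=\bigl\|\nu^{+}-\mu_{+}\bigr\|+\nu^{-}(\Omega)+\mu_{-}(\Omega),
\]
where in the last step I used that $\nu^{-}+\mu_{-}$ is already a positive measure. Each of the three non-negative summands is bounded below by $0$, except the middle one which equals $\|\nu^{-}\|$, so $\|\nu-\mu\|\ge\|\nu^{-}\|$, giving the reverse inequality.

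Equality in the displayed identity forces $\|\nu^{+}-\mu_{+}\|=0$ and $\mu_{-}(\Omega)=0$, i.e.\ $\mu_{+}=\nu^{+}$ and $\mu_{-}=0$, hence $\mu=\nu^{+}$, which proves uniqueness. The main conceptual point is the additivity of total variation across the Hahn partition; once that is invoked, the rest is just a non-negativity argument. The only minor care needed is to justify the additive splitting of $\|\cdot\|$ for measures supported on disjoint measurable sets, which is immediate from the definition $\|\eta\|=|\eta|(\Omega)$ together with the fact that the Jordan decomposition of a sum of mutually singular signed measures is the sum of their Jordan decompositions. This will not be a real obstacle but it is the one step worth stating explicitly.
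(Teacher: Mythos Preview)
Your proof is correct and follows essentially the same route as the paper's: both arguments take $\nu^{+}$ as the candidate minimizer for the upper bound, then split the total variation of $\nu-\mu$ across the Hahn partition $(\Omega^{+},\Omega^{-})$ to obtain the lower bound and uniqueness. Your version is slightly more explicit in that you decompose $\mu$ into $\mu_{+}$ and $\mu_{-}$ and state the additivity of total variation as an equality, whereas the paper phrases the corresponding steps as inequalities $\|\nu-\mu\|\ge|\nu-\mu|(\Omega^{-})$ and $\|\nu-\mu\|\ge|\nu^{-}+\mu|(\Omega^{-})+|\nu^{+}-\mu|(\Omega^{+})$; but the content is the same.
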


\begin{proof}
Since $\nu^{+}\in\mathcal{M}^{+}$, we have the bound $\mathrm{LOP}(\nu
)\le\Vert\nu- \nu^{+} \Vert$, which can be rewritten as $\mathrm
{LOP}(\nu)\le\Vert\nu^{-} \Vert$. To prove the opposite bound $\mathrm
{LOP}(\nu)\ge\Vert\nu^{-} \Vert$, we proceed as follows. For any $\mu
\in\mathcal{M}^{+}$, we have the bound
%
\begin{equation}
\llVert\nu- \mu\rrVert\ge\llvert \nu- \mu\rrvert \bigl(\varOmega^{-}
\bigr). \label{gen-00}
\end{equation}
Since $\nu=\nu^{+}-\nu^{-}$ and $\nu^{+}(A)=0$ for every $A\subset
\varOmega^{-}$, the right-hand side of bound (\ref{gen-00}) is equal to
$|\nu^{-}+\mu|(\varOmega^{-})$, which is not smaller than $\nu^{-}(\varOmega
^{-})$. The latter is, by definition, equal to $\Vert\nu^{-} \Vert$.
This establishes the bound $\mathrm{LOP}(\nu)\ge\Vert\nu^{-} \Vert$
and completes the proof of equation (\ref{th-0a}). We still need to
show that $\mu=\nu^{+}$ is the only measure $\mu\in\mathcal{M}^{+}$
such that the equation
%
\begin{equation}
\llVert\nu- \mu\rrVert=\bigl\llVert\nu^{-} \bigr\rrVert
\label{gen-01}
\end{equation}
holds. Note that $\Vert\nu^{-} \Vert=\nu^{-}(\varOmega^{-})$. Since
%
\begin{equation}
\llVert\nu- \mu\rrVert \ge \bigl\llvert \nu^{-}+ \mu \bigr\rrvert
\bigl( \varOmega^{-} \bigr)+ \bigl\llvert \nu^{+}- \mu \bigr
\rrvert \bigl(\varOmega^{+} \bigr), \label{gen-02}
\end{equation}
in order to have equation (\ref{gen-01}), the right-hand side of
inequality (\ref{gen-02}) must be equal to $\nu^{-}(\varOmega^{-})$. This
can happen only when $\mu(\varOmega^{-})=0$ and $|\nu^{+}- \mu|(\varOmega
^{+})=0$, with the former equation implying that the support of $\mu$
must be $\varOmega^{+}$, and the latter equation implying that $\mu$ must
be equal to $\nu^{+}$ on $\varOmega^{+}$. Hence, $\mu=\nu^{+}$. This
finishes the proof of Theorem~\ref{th-0}.
\end{proof}

Similarly to the LOP index, the index of lack of negativity (LON) of
$\nu\in\mathcal{M}$ is given by the equation
%
\begin{equation}
\label{th-0b} \mathrm{LON}(\nu)=\bigl\llVert\nu^{+} \bigr\rrVert,
\end{equation}
and the corresponding index of lack of sign (LOS) is
%
\begin{equation}
\mathrm{LOS}(\nu)=2 \min \bigl\{ \bigl\llVert\nu^{-} \bigr\rrVert,
\bigl\llVert \nu^{+} \bigr\rrVert \bigr\}. \label{th-0c}
\end{equation}
The reason for doubling the minimum on the right-hand side of equation
(\ref{th-0c}) is the same as in the more specialized cases discussed
earlier. Namely, due to the equation $\Vert\nu^{-} \Vert+ \Vert\nu
^{+} \Vert=\Vert\nu\Vert$, we have that $\min \{ \Vert\nu^{-}
\Vert,\Vert\nu^{+} \Vert \}$ does not exceed $\Vert\nu\Vert/2
$. Hence, for LOS to be always between $0$ and $\Vert\nu\Vert$, just
like LOP and LON are, we need to double the minimum. Finally, we
introduce the normalized indices
\[
\mathrm{LOP}^*(\nu)={\llVert\nu^{-} \rrVert\over\llVert\nu\rrVert}, \qquad \mathrm{LON}^*(\nu)=
{\llVert\nu^{+} \rrVert\over\llVert\nu\rrVert},
\]
and
\[
\mathrm{LOS}^*(\nu)=2 \min \bigl\{ \mathrm{LOP}^*(\nu),\mathrm {LON}^*(\nu)
 \bigr\},
\]
whose values are always in the unit interval $[0,1]$.

\section{Conclusion}
\label{section-conclude}

In this paper, we have introduced indices that, in a natural way,
quantify the lack of increase, decrease, and monotonicity of functions,
as well as the lack of positivity, negativity, and sign-constancy in
signed measures. In addition to being of theoretical interest, this
research topic also has practical implications, and for the latter
reason, we have also introduced a simple and convenient numerical
procedure for calculating the indices without resorting to frequently
unwieldy closed-form expressions. The indices satisfy a number of
natural properties, and they also facilitate the ranking of functions
according to their lack of monotonicity. Relevant applications in
Insurance, Finance, and Economics have been pointed out, and some of
them discussed in greater detail.

\section*{Acknowledgments}

We are indebted to Professor Yu. Mishura and two anonymous reviewers
for insightful comments and constructive criticism, which guided our
work on the revision. We gratefully acknowledge the grants ``From Data
to Integrated Risk Management and Smart Living: Mathematical Modelling,
Statistical Inference, and Decision Making'' awarded by the Natural
Sciences and Engineering Research Council of Canada to the second
author (RZ), and ``A New Method for Educational Assessment: Measuring
Association via LOC index'' awarded by the national research
organization Mathematics of Information Technology and Complex Systems,
Canada, in partnership with Hefei Gemei Culture and Education
Technology Co.~Ltd, Hefei, China, to Jiang Wu and RZ.


%
%
\end{document}